\documentclass{amsart}
\usepackage{amssymb}
\usepackage{amsfonts}
\usepackage{amsmath}
\usepackage{mathrsfs}

\newcommand{\myself}{\author{Gianluca Cassese}
                     \address{Universit\`{a} Milano Bicocca and University of Lugano}
                     \email{gianluca.cassese@unimib.it}
                     \curraddr{Department of Statistics, Building U7, Room 2081, via Bicocca 
                               degli Arcimboldi 8, 20126 Milano - Italy}}

\newtheorem{theorem}{Theorem}
\theoremstyle{plain}

\newtheorem{corollary}{Corollary}

\newtheorem{lemma}{Lemma}

\newtheorem{remark}{Remark}

\numberwithin{equation}{section}
\setlength{\textwidth}{6.5in}
\setlength{\oddsidemargin}{0in}
\setlength{\evensidemargin}{0in}
\setlength{\textheight}{9in}
\setlength{\topmargin}{0.0in}
\setlength{\headheight}{0.12in}

\newcommand{\LIM}{\mathrm{LIM}} 
 
\newcommand{\B}{\mathfrak{B}} 
 
\newcommand{\F}{\mathscr{F}}
\newcommand{\R}{\mathbb{R}}

\newcommand{\restr}[2]{#1\vert #2} 
\newcommand{\lrestr}[2]{\left.#1\right\vert #2}

\newcommand{\abs}[1]{\vert #1\vert} 
\newcommand{\dabs}[1]{\left\vert #1\right\vert} 
\newcommand{\norm}[1]{\Vert #1\Vert}

\newcommand{\condP}[2]{P(#1\vert #2)} 
 
\newcommand{\lcondP}[2]{P\left(\left.#1\right\vert #2\right)}

\newcommand{\net}[3]{\left\langle #1_{#2}\right\rangle_{#2\in #3} } 
\newcommand{\nnet}[3]{\left\langle #1\right\rangle_{#2\in #3} } 
\newcommand{\seq}[2]{\net{#1}{#2}{\mathbb{N}}} 
\newcommand{\sseq}[2]{\nnet{#1}{#2}{\mathbb{N}}} 
\newcommand{\seqn}[1]{\seq{#1}{n}} 
\newcommand{\sseqn}[1]{\sseq{#1}{n}}

\newcommand{\Der}[2]{d #1\left/d #2\right.}
\newcommand{\dDer}[2]{\dfrac{d #1}{d #2}}

\newcommand{\set}[1]{\mathbf{1}_{#1}}
\newcommand{\E}{\mathscr E}
\newcommand{\M}{\mathscr M}
\newcommand{\D}{\mathscr D}
\newcommand{\normQ}[1]{\norm{#1}_{\mathcal Q}}
\newcommand{\Pd}{\mathscr P^d}
\newcommand{\Fd}[1]{\F_{\delta_{#1}}}
\newcommand{\X}[1]{X_{\delta_{#1}}}
\newcommand{\V}[1]{\dabs{\X{#1}-\condP{\X{#1+1}}{\Fd{#1}}}}

\begin{document}

\title{Quasimartingales with a Linearly Ordered Index Set}
\myself
\date
\today
\subjclass[2000]{Primary 28A12, 60G07, 60G20.} 

\keywords{Doob Meyer decomposition, Natural increasing process, Potential,
Quasi-potential, Rao decomposition, Riesz decomposition.}

\maketitle

\begin{abstract}
We prove a version of Rao decomposition for quasi-martingales indexed by a linearly ordered set.
\end{abstract}

\section{Introduction.}

Rao's Theorem \cite[Theorem 2.3, p. 89]{rao quasi} asserts that, under the 
\textit{usual conditions,} each quasi-martingale decomposes into the difference 
of two positive supermartingales. This decomposition turns out to be a crucial step
in the proof of the Bichteler Dellacherie theorem, the fundamental theorem of 
semimartingale theory. We offer here a simple proof of this classical result 
without assuming right continuity neither for the filtration nor for the processes. 
Moreover, we allow the index set to be just a linearly ordered set. The setting 
is in fact the same as that proposed in \cite{STAPRO}, where a version of the 
Doob Meyer decomposition was obtained. A suitable example of a linearly ordered
index set would be the whole real line.

In our general model the main source of difficulty (and of interest) lies in the 
need to forsake the stopping machinery, a most useful tool in stochastic analysis. 
We rather exploit the measure theoretic approach to stochastic processes, inaugurated by 
Dol\'{e}ans-Dade \cite{doleans} and followed by many others, including Metivier and 
Pellaumail \cite{metivier pellaumail} and Dellacherie and Meyer \cite{dellacherie meyer}.
In fact, we argue, the measure representation of processes is useful also in the absence
of countable additivity, on which the literature has focused hitherto. The main result
of this paper, Theorem \ref{theorem ba}, establishes that quasimartingales are 
isometrically isomorphic to locally countably additive measures. We show that all 
classical decompositions follow straightforwardly.

\section{The Model.}
The following notation will be convenient. $(\Omega,\F,P)$ will be a standard probability 
space, $\Delta$ a linearly ordered set and $(\Fd{}:\delta\in\Delta)$ an increasing family 
of sub $\sigma$ algebras of $\F$. All $\sigma$ algebras considered include the corresponding 
family of $P$ null sets. $L^p(\Fd{})$ will be preferred to $L^p(\Omega,\Fd{},P)$, $\B(\Fd{})$
to $\B(\Omega,\Fd{})$ (the space of bounded, $\Fd{}$ measurable functions on $\Omega$) and
$\bar\Omega$ to $\Omega\times\Delta$. $\bar\F$ will be the augmentation of 
$\F\otimes 2^\Delta$ with respect to sets with $P$ null projection on $\Omega$.
We shall also write $]\delta_1,\delta_2]$ for the (possibly empty) set 
$\{\delta\in\Delta:\delta_1<\delta\leq\delta_2\}$. 

All stochastic processes $(\X{}:\delta\in\Delta)$ to be mentioned will be adapted, 
i.e. such that $X_\delta$ is $\Fd{}$ measurable for each $\delta\in\Delta$, but no
form of right continuity is assumed. Two processes $X$ and $Y$ are considered as 
equal up to modification whenever $P(X_\delta=Y_\delta)=1$ for all $\delta\in\Delta$ 
and all decompositions introduced later should be understood to be unique in the above
sense.

Of special importance is the collection $\D$ of finite subsets of $\Delta$, with
generic element $d=\{\delta_1\leq\ldots\leq\delta_{N+1}\}$, a directed with respect
to inclusion. $\D_\delta$ (resp. $\D_\delta^{\delta'}$) will
denote the family of those sets $\{\delta_1\leq\ldots\leq\delta_{N+1}\}\in\D$ such that 
$\delta_1=\delta$ (resp. $\delta_1=\delta$ and $\delta_{N+1}=\delta'$).

To each $d\in\D$ we associate the collection
\begin{equation}
\label{P(d)}
\Pd=\left\{\bigcup_{n=1}^NF_n\times]\delta_n,\delta_{n+1}]
:F_n\in\Fd{n},\ n=1,\ldots,N\right\}  
\end{equation}
and define $\mathscr P=\bigcup_{d\in\D}\Pd$ and $\E=\bigcup_{d\in\D}\B(\Pd)$. Abusing notation, 
we use the symbol $\Pd$ also for the operator $\Pd:\B(\bar\F)\to\B(\Pd)$ defined implicitly as
\begin{equation}
\label{Proj}
\Pd(U)=\sum_{n=1}^N\condP{U_{\delta_{n+1}}}{\Fd{n}}\set{]\delta_n,\delta_{n+1}]}
\end{equation}
A process $A$ is increasing if $P(0\leq A_{\delta_1}\leq A_{\delta_2}) =1$ for 
$\delta_1\leq\delta_2$ and $\inf_{\delta\in\Delta}P(A_\delta)=0$; it is integrable 
if $\sup_{\delta\in\Delta}P(\abs{A_\delta})<\infty$. An increasing process $A$ 
is natural if
\begin{equation}
\label{natural}
P\left(b\int fdA\right)=\underset{d\in\D}\LIM\ P\int\Pd(b)fdA\qquad b\in L^\infty(\F),
\ f\in\E
\end{equation}
where $\LIM$ denotes here the Banach limit operator. 

\section{Quasi-martingales.}
For each $d\in\D$ the $d$-variation of a process $X$ is defined to be
\begin{equation}
\label{variation}
V^d(X)=\sum_{n=1}^N\V{n}\quad\text{where}\quad d=\{\delta_1,\ldots,\delta_{N+1}\} 
\end{equation} 
A process $X$ is a quasi-martingale if 
\begin{equation}
\label{quasi}
\normQ{X}=\sup\left\{P\left(V^d(X)\right):d\in\D\right\}<\infty
\end{equation}
Quasi-martingales have been introduced and studied by Fisk \cite{fisk}, Orey \cite{orey}
and Rao \cite{rao quasi} who proved the classical decomposition theorem in its general form. 
Related results were obtained by Stricker \cite{stricker characterisation} and \cite{stricker quasi}
who proved uniqueness of the Rao decomposition. Our definition follows Rao \cite{rao quasi} 
and differs from the one adopted by Stricker \cite[p. 55]{stricker quasi} and by Dellacherie 
and Meyer \cite[p. 98]{dellacherie meyer}, which requires quasi-martingales to be integrable, 
a property that we do not impose here. However, in our setting supermartingales need not be 
quasi-martingales if not integrable. 

Quasi martingales form a normed space with respect to $\normQ{\cdot}$ if we only identify 
processes which differ by a martingale. We will denote such space as $\mathcal Q$. A quasi-%
potential $X$ is a quasi-martingale which admits a sequence $\seqn{\delta}$ in $\Delta$ such 
that $\lim_nP(\abs{X_{\bar\delta_n}})=0$ for any sequence $\seqn{\bar\delta}$ such that
$\bar\delta_n\geq\delta_n$ for $n=1,2,\ldots$. A potential is at the same time a 
quasi-potential and a positive supermartingale. Of course the difference of
two potentials is a quasi-potential.

We shall make use of the following inequality, where $d'=\{\delta_1,\ldots,\delta_{N+1}\}$
and $d=\{\delta_1,\delta_{N+1}\}$
\begin{eqnarray}
\label{domination}
\notag\lcondP{V^{d'}(X)}{\Fd{1}}&=&\lcondP{\sum_{n=1}^N\V{n}}{\Fd{1}}\\
\notag&\geq& \sum_{n=1}^N\dabs{\condP{\X{n}}{\Fd{1}}-\condP{\X{n+1}}{\Fd{1}}}\\
&\geq& \dabs{\sum_{n=1}^N\lcondP{\X{n}-\X{n+1}}{\Fd{1}}}\\
\notag &=&\dabs{\X{1}-\lcondP{\X{N+1}}{\Fd{1}}}\\
\notag &=&V^d(X)
\end{eqnarray}
We draw from (\ref{domination}) the following implications
\begin{lemma}
\label{lemma preliminary}
Let $X$ be a quasi-martingale and $\seqn{\delta}$ an increasing sequence. 
The net $\nnet{ \condP{V^d(X)}{\Fd{}}}{d}{\D_\delta}$ and the sequence 
$\sseqn{\condP{X_{\delta_n\vee\delta}}{\Fd{}}}$ both converge in $L^1(\Fd{})$.

\end{lemma}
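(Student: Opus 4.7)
My plan is to get both statements out of the fundamental monotonicity encoded in inequality (\ref{domination}).

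For the net, the first step is to promote (\ref{domination}) to a statement about arbitrary refinements. Given $d\subset d'$ in $\D_\delta$, I can reach $d'$ from $d$ by successively inserting one point at a time and adjoining points at the top. For an interior insertion of $\delta^*$ between two consecutive $\delta_k<\delta_{k+1}$ of $d$, conditioning on $\F_{\delta_k}$ and using Jensen exactly as in (\ref{domination}) shows that the two new summands of $V^{d'}(X)$ dominate $|\X{k}-\condP{\X{k+1}}{\F_{\delta_k}}|$ in conditional $\F_{\delta_k}$-mean; all other summands of $V^d(X)$ reappear unchanged in $V^{d'}(X)$. Adjoining a point on the right only adds a nonnegative term. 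Tower property then yields $\condP{V^{d'}(X)}{\Fd{}}\geq\condP{V^d(X)}{\Fd{}}$ whenever $d\subset d'$ in $\D_\delta$. Thus the net $\langle\condP{V^d(X)}{\Fd{}}\rangle_{d\in\D_\delta}$ is increasing in $L^1(\Fd{})$ and bounded in $L^1$-norm by $\normQ{X}$.

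To pass from monotone $L^1$-boundedness of a net to actual convergence, I pick a cofinal sequence $(d_k)\subset\D_\delta$ along which $P(V^{d_k}(X))$ approaches $\sup_{d\in\D_\delta}P(V^d(X))$. By monotone convergence for sequences, $\condP{V^{d_k}(X)}{\Fd{}}$ converges in $L^1(\Fd{})$ to some $Z$. For any $d\in\D_\delta$, the net element $\condP{V^{d\cup d_k}(X)}{\Fd{}}$ dominates both $\condP{V^d(X)}{\Fd{}}$ and $\condP{V^{d_k}(X)}{\Fd{}}$, has the same supremum of expectations, and hence also converges in $L^1$ to $Z$; this forces $\condP{V^d(X)}{\Fd{}}\to Z$ along the directed set $\D_\delta$.

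For the sequence, I may assume without loss of generality that $\delta_n\geq\delta$ for every $n$ (the preceding terms of $\sseqn{\condP{X_{\delta_n\vee\delta}}{\Fd{}}}$ are simply $\X{}$). For each $m$ consider $d_m=\{\delta,\delta_1,\ldots,\delta_m\}\in\D_\delta$. Conditioning $V^{d_m}(X)$ on $\Fd{}$ and applying Jensen to each summand produces
\begin{equation*}
\condP{V^{d_m}(X)}{\Fd{}}\geq\sum_{n=0}^{m-1}\dabs{\condP{X_{\delta_n}}{\Fd{}}-\condP{X_{\delta_{n+1}}}{\Fd{}}},
\end{equation*}
with the convention $\delta_0=\delta$. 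Taking expectations, the right-hand side is bounded by $\normQ{X}$ uniformly in $m$, so monotone convergence ensures the series $\sum_n\dabs{\condP{X_{\delta_n}}{\Fd{}}-\condP{X_{\delta_{n+1}}}{\Fd{}}}$ converges in $L^1(\Fd{})$. Hence $\sseqn{\condP{X_{\delta_n}}{\Fd{}}}$ is Cauchy in $L^1(\Fd{})$ and so converges.

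The only mildly delicate point is the cofinal-sequence argument bridging monotone nets and $L^1$ convergence; everything else is a direct exploitation of (\ref{domination}).
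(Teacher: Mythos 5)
Your proof is correct and, in substance, follows the same route as the paper: both halves rest on the inequality (\ref{domination}), used first to show that the net $\nnet{\condP{V^d(X)}{\Fd{}}}{d}{\D_\delta}$ is increasing with $P(V^d(X))\leq\normQ{X}$ (the paper does this in one grouped computation over $\D_\delta$, you do it by single-point insertions, which amounts to the same thing), and then, for the second claim, to bound $P\left(\sum_{n}\dabs{\condP{X_{\delta_n}}{\Fd{}}-\condP{X_{\delta_{n+1}}}{\Fd{}}}\right)$ by $\normQ{X}$ so that the sequence is Cauchy in the complete space $L^1(\Fd{})$. The one genuine divergence is the passage from an increasing, $L^1$-bounded net to $L^1$ convergence: the paper outsources this to \cite[Lemma 1]{STAPRO}, whereas you prove it inline via a sequence $(d_k)$ with $P(V^{d_k}(X))\to\sup_{d\in\D_\delta}P(V^d(X))$, which makes the lemma self-contained at the cost of a few lines. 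Two small repairs in that step: first, to apply monotone convergence you should replace $d_k$ by $d_1\cup\dots\cup d_k$ so the sequence is nested (harmless, since taking unions only increases the conditional variations); second, ``cofinal'' is a misnomer --- a sup-attaining sequence is generally not cofinal in $\D_\delta$, and cofinality is not what your argument uses. What it does use, and what would make the final ``this forces'' explicit, is the sandwich: letting $Z$ be the $L^1$ limit of $\condP{V^{d_k}(X)}{\Fd{}}$, one gets $\condP{V^d(X)}{\Fd{}}\leq Z$ a.s.\ for every $d\in\D_\delta$ by letting $j\to\infty$ in $\condP{V^d(X)}{\Fd{}}\leq\condP{V^{d\cup d_j}(X)}{\Fd{}}$, and then for $d\supseteq d_k$ one has $0\leq Z-\condP{V^d(X)}{\Fd{}}\leq Z-\condP{V^{d_k}(X)}{\Fd{}}$, whose expectation is small for large $k$; this yields convergence along the directed set.
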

\begin{proof}
Let $d'=\{\delta_{n_1},\ldots,\delta_{n_{K+1}}\},\ d=\{\delta_1,\ldots,\delta_{N+1}\}\in\D_\delta$ and 
$d\leq d'$. Then 
$$
V^{d'}(X)=\sum_{k=1}^K\V{k}=\sum_{n=1}^N\sum_{\{\delta_n\leq\delta_{n_k}\leq\delta_{n+1}\}}
\dabs{\X{n_k}-\condP{\X{n_{k+1}}}{\Fd{n_k}}}
$$
so that, by (\ref{domination}),
\begin{eqnarray*}
\lcondP{V^{d'}(X)}{\Fd{}}&=&\sum_{n=1}^N\lcondP{\lcondP{\sum_{\{\delta_n\leq\delta_{n_k}\leq\delta_{n+1}\}}
\dabs{\X{n_k}-\condP{\X{n_{k+1}}}{\Fd{n_k}}}}{\Fd{n}}}{\Fd{}}\\
&\geq&\sum_{n=1}^N\lcondP{\V{n}}{\Fd{}}\\
&=&\lcondP{V^d(X)}{\Fd{}}
\end{eqnarray*}
The net $\nnet{ \condP{V^d(X)}{\Fd{}}}{d}{\D_\delta}$ is thus increasing but $P(V^d(X))\leq\normQ{X}$. 
Convergence in $L^1$ follows then from \cite[Lemma 1]{STAPRO}. Again by (\ref{domination}), we get that 
$
P\left(\sum_{n=1}^N\dabs{ \condP{\X{n}}{\Fd{}}-\condP{\X{n+1}}{\Fd{}} }\right)
\leq P(V^d(X))\leq\normQ{X}
$
which proves the second claim.
\end{proof}

\section{A Characterisation.}
Quasi-martingales can be characterised as elements of the space $ba(\bar\F)$, i.e. as
bounded finitely additive measures on $\bar\F$. $x\in ba(\bar\F)$ is locally countably 
additive, in symbols $x\in\mathscr M^{loc}$, if $x$ is countably additive in restriction to 
$\Pd$ for each $d\in\D$. 
\begin{theorem}
\label{theorem ba}
There is an isometric isomorphism between $\mathcal Q$ and $\mathscr M^{loc}$
determined by the identity
\begin{equation}
\label{isometry}
x(f)=-\underset{d\in\D}\LIM \ P\int\Pd(f)dX\qquad f\in\B(\bar\F)
\end{equation}
In particular,
\item[(\textit{i})] each $x\in \M^{loc}$ corresponds via (\ref{isometry}) to
one and only one integrable quasi-potential $X$;
\item[(\textit{ii})] $x\in \M^{loc}_+$ if and only if it corresponds via (\ref{isometry})
to an integrable potential.
\end{theorem}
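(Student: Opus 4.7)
The plan is to send each $X\in\mathcal Q$ to the functional $x$ defined by (\ref{isometry}), show that this map is a norm-preserving embedding into $\M^{loc}$ whose kernel is the class of martingales, and then produce an inverse by attaching to every $x\in\M^{loc}$ a canonical integrable quasi-potential. For the forward direction I would fix $d=\{\delta_1,\ldots,\delta_{N+1}\}$ and $f\in\B(\bar\F)$, unfold (\ref{Proj}), and use the conditional orthogonality of the raw increment $\X{n+1}-\X{n}$ against the $\Fd{n}$-measurable factor $\condP{f_{\delta_{n+1}}}{\Fd{n}}$ to rewrite the $n$-th term of $P\int\Pd(f)dX$ as $P[\condP{f_{\delta_{n+1}}}{\Fd{n}}(\condP{\X{n+1}}{\Fd{n}}-\X{n})]$, a quantity dominated in absolute value by $\norm{f}_\infty P(V^d(X))\le\norm{f}_\infty\normQ{X}$; crucially, no individual integrability of $\X{n}$ is required. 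The Banach limit then yields $x\in ba(\bar\F)$ with $\norm{x}_{ba}\le\normQ{X}$, and martingales lie in its kernel because every summand vanishes.

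Local countable additivity follows from the observation that, for $A=\bigcup_n F_n\times]\delta_n,\delta_{n+1}]\in\Pd$ with $F_n\in\Fd{n}$, one has $P_{d'}(\set{A})=\set{A}$ for every refinement $d'\supseteq d$; this collapses the $\LIM$ to the explicit value $x(A)=\sum_n P[\set{F_n}(\X{n}-\condP{\X{n+1}}{\Fd{n}})]$, manifestly $\sigma$-additive in each $F_n$. The reverse isometric inequality I would obtain by testing against the extremal function $f_d=\sum_n\epsilon_n\set{]\delta_n,\delta_{n+1}]}$, with $\epsilon_n=\mathrm{sign}(\X{n}-\condP{\X{n+1}}{\Fd{n}})$ an $\Fd{n}$-measurable sign of modulus at most one; the identity $P_{d'}(f_d)=f_d$ for every $d'\supseteq d$ gives $x(f_d)=P(V^d(X))$, and supremizing over $d\in\D$ delivers $\norm{x}_{ba}\ge\normQ{X}$.

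To invert the map, fix $x\in\M^{loc}$ and for $\delta\le\delta'$ define a $\sigma$-additive signed measure $\nu_\delta^{\delta'}$ on $\Fd{}$ by $\nu_\delta^{\delta'}(F)=x(F\times]\delta,\delta'])$; its absolute continuity with respect to $P$ (forced by the augmentation convention on $\bar\F$) lets Radon--Nikodym supply densities $g_\delta^{\delta'}\in L^1(\Fd{})$ satisfying the cocycle identity $g_\delta^{\delta''}=g_\delta^{\delta'}+\condP{g_{\delta'}^{\delta''}}{\Fd{}}$. When $x\in\M^{loc}_+$ these densities are nonnegative, increase in $\delta'$, and have $L^1$ norm bounded by $\norm{x}_{ba}$, so $X_\delta:=\lim_{\delta'}g_\delta^{\delta'}$ exists in $L^1$ by monotone convergence; passing to the limit in the cocycle gives $X_\delta=g_\delta^{\delta'}+\condP{X_{\delta'}}{\Fd{}}$, which is at once the supermartingale inequality $X_\delta\ge\condP{X_{\delta'}}{\Fd{}}$ and the identity $\nu_\delta^{\delta'}(F)=P[\set{F}(X_\delta-\condP{X_{\delta'}}{\Fd{}})]$ matching the explicit formula for $x$ found above. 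Any sequence along which $\nu_{\delta_1}^{\delta_n}(\Omega)\uparrow\sup_{\delta'}\nu_{\delta_1}^{\delta'}(\Omega)$ then witnesses the quasi-potential property and establishes (ii). The signed case will be handled by Jordan-decomposing each local restriction $\restr{x}{\Pd}$ and reconciling the Hahn sets across refinements; this gluing, together with $L^1$ convergence of $g_\delta^{\delta'}$ in the absence of monotonicity, is the hard part of the proof, and I expect Lemma \ref{lemma preliminary} to be the central tool since it provides exactly that kind of $L^1$ convergence for nets of conditional expectations under bounded $d$-variation. That $X$ returns the original $x$ through (\ref{isometry}) is then automatic once both functionals agree on $\bigcup_d\Pd$, and uniqueness among integrable quasi-potentials follows because the difference of two such candidates is simultaneously a martingale and a quasi-potential, hence identically zero.
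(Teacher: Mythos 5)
Your forward direction is sound and follows the paper's route: the bound $\dabs{P\int\Pd(f)dX}\leq\norm{f}\normQ{X}$ plus representation of bounded functionals on $\B(\bar\F)$ gives $x\in ba(\bar\F)$ with $\norm{x}\leq\normQ{X}$, and testing against $f_d=\sum_n\epsilon_n\set{]\delta_n,\delta_{n+1}]}$ with adapted signs gives the reverse inequality, exactly as in the paper's appeal to near-extremal $h\in\E$. Your argument for local countable additivity is actually a legitimate simplification: since $\mathscr P^{d'}(\set{A})=\set{A}$ for $d'\supseteq d$, the Banach limit collapses and $x(A)=\sum_nP\left(\set{F_n}(\X{n}-\condP{\X{n+1}}{\Fd{n}})\right)$, which is countably additive in each $F_n$ because each density lies in $L^1$ by the quasi-martingale bound; the paper instead passes through Remark \ref{remark variation}, Lemma \ref{lemma preliminary} and an interchange of limits for $\abs{x}$. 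Your positive-case inverse (monotone convergence for the densities $g_\delta^{\delta'}$, cocycle identity, tail sequence exhausting $\sup_{\delta'}\nu_{\delta_1}^{\delta'}(\Omega)$) and your uniqueness argument (a process that is simultaneously a martingale and a quasi-potential vanishes) also match the paper.

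The genuine gap is the inverse map for signed $x$, which is precisely claim (\textit{i}) in full strength, and your proposal does not contain a proof of it. You defer to ``Jordan-decomposing each local restriction $\restr{x}{\Pd}$ and reconciling the Hahn sets across refinements'' and expect Lemma \ref{lemma preliminary} to supply the missing $L^1$ convergence; but Lemma \ref{lemma preliminary} is a statement about a quasi-martingale $X$, which at this stage of the inverse construction you do not yet have, the Hahn sets of the restrictions to different $\Pd$ carry no obvious compatibility, and no mechanism is offered for the convergence of $g_\delta^{\delta'}$ once monotonicity is lost. Passing instead to the global Jordan parts $x^{\pm}$ and applying your positive case would require showing $x^{\pm}\in\M^{loc}$, which you have not done (and the paper never needs). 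The paper's device is different and does the whole job at once: choose monotone sequences as in (\ref{deltan}) with $\lim_n\abs{x}(]\delta_x(n),\delta^x(n)])=\sup_{\delta,\delta'}\abs{x}(]\delta,\delta'])$. Then for $m\geq n$ and every $F$ one has $\dabs{x(F\times]\delta,\delta^x(m)])-x(F\times]\delta,\delta^x(n)])}\leq\abs{x}(]\delta^x(n),\delta^x(m)])\to0$ uniformly in $F$, so $x_\delta(F)=\lim_nx(F\times]\delta,\delta^x(n)])$ exists, is countably additive as a uniform limit of countably additive measures, and is $\ll P$ on $\Fd{}$; its density $X_\delta$ satisfies $P\left(\set{F}(X_\delta-\condP{X_{\delta'}}{\Fd{}})\right)=x(F\times]\delta,\delta'])$, hence (\ref{isometry}) on $\mathscr P$, and the same exhaustion yields the quasi-potential estimate $\lim_nP(\abs{X_{\bar\delta_n}})\leq\lim_n\lim_k\abs{x}(]\delta^x(n),\delta^x(k)])=0$ with no Hahn decomposition and no positivity assumption. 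This choice of sequences, absent from your proposal, is the missing idea.
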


\begin{remark}
\label{remark variation}
If $x$ and $X$ are as in (\ref{isometry}) then necessarily
$x(f)=\LIM_{d\in\D}\ x(\Pd(f))$. Thus, if $\abs{x}$ denotes the total variation
measure of $x$ and if $H\in\mathscr P$, 
$$
\abs{x}(H)=\sup_{\{g\in\B(\bar\F),\abs{g}\leq1\}} x(\set{H}g)
=\sup_{\{g\in\B(\bar\F),\abs{g}\leq1\}}\LIM_{d\in\D}\ x(\Pd(g)\set{H})
\leq\sup_{\{h\in\E,\abs{h}\leq1\}}x(h\set{H})
$$
In other words, the restriction of $\abs{x}$ to $\mathscr P$ coincides with total 
variation of $\restr{x}{\mathscr P}$.
\end{remark}

\begin{proof}
Let $X$ be a quasi-martingale. In view of the inequality
$\dabs{P\int\Pd(f)dX}\leq\norm{f}_{\B(\bar\F)}\normQ{X}$, 
the right-hand side of (\ref{isometry}) defines a continuous linear 
functional on $\B(\bar\F)$ and thus admits the representation as an integral
with respect to some $x\in ba(\bar\F)$, \cite[corollary IV.5.3, p. 259]{bible}
with $\norm{x}\leq\normQ{X}$. The correspondence between $X$ and $x$ is linear 
by the properties of the Banach limit. Given that $P\int hdX\geq\normQ{X}-\epsilon$ 
for all $\epsilon>0$ and some $h\in\E$ with $\norm{h}\leq1$, 
then $\norm{x}=\normQ{X}$. Fix $d\in\D$ and let $\seq{H}{k}$ be a decreasing sequence 
in $\Pd$ with empty intersection. With no loss of generality we may assume that 
$d=\{\delta,\delta'\}$. By definition, $H_k=F_k\times]\delta,\delta']$ for some 
$F_k\in\F_{\delta}$. Then, by Remark \ref{remark variation}
\begin{eqnarray*}
\abs{x}(H_k)&=&\sup_{\{h\in\E,\abs{h}\leq1\}}x(h\set{H_k})\\
&\leq&\sup_{d\in\D_\delta^{\delta'}}P
\left(F_k\sum_{n=1}^N\abs{X_{\delta_n}-\condP{X_{\delta_{n+1}}}{\F_{\delta_n}}}\right)\\
&=&\sup_{d\in\D_\delta^{\delta'}}P\left(F_k\condP{V^d(X)}{\Fd{}}\right)
\end{eqnarray*}
By Lemma \ref{lemma preliminary}(\textit{i}), $\condP{V^d(X)}{\Fd{}}$ converges in 
$L^1(\Fd{})$ and therefore $P(F_kV^d(X))$ converges with $d\in\D_\delta^{\delta'}$ 
uniformly in $k$ so that $\lim_k\abs{x}(H_k)\leq
\lim_k\lim_{d\in\D_\delta^{\delta'}}P(F_kV^d(X))=
\lim_{d\in\D_\delta^{\delta'}}\lim_kP(F_kV^d(X))=0$
which proves that $\abs{x}\in \M^{loc}$ and, \textit{a fortiori}, that
the same is true of $x$. If $X$ is an integrable potential, it is then a quasi-martingale
associated to some $x\in\mathscr M^{loc}_+$.

Fix now $x\in \M^{loc}$ and let $\sseqn{\delta_x(n)}$ and $\sseqn{\delta^x(n)}$
be monotonic sequences in $\Delta$ such that 
\begin{equation}
\label{deltan}
\lim_n\abs{x}(]\delta_x(n),\delta^x(n)])=\sup_{\delta,\delta'\in\Delta}\abs{x}(]\delta,\delta'])
\end{equation}
Define $x^\delta,\ x_\delta\in ba(\F)$ implicitly as
\begin{equation}
\label{xd}
x^\delta(F)=\lim_nx(F\times]\delta_x(n),\delta]),\quad 
x_\delta(F)=\lim_nx(F\times]\delta,\delta^x(n)])\qquad\qquad F\in\F 
\end{equation}
Given that $x$ is locally countably additive, then $\restr{x_\delta}{\Fd{}}\ll\restr{P}{\Fd{}}$
and we denote by $X_\delta$ the corresponding Radon Nykodim derivative. Clearly, 
$P(\abs{X_\delta})\leq\norm{x}$. If $F\in\Fd{} $ and $\delta'\geq\delta$, then 
$P(\set{F}(X_\delta -\condP{X_{\delta'}}{\Fd{} })=x(F\times]\delta,\delta'])$ 
so that (\ref{isometry}) is satisfied. Moreover 
$\lim_nP(\abs{X_{\bar\delta_n}})\leq\lim_n\lim_k\abs{x}(]\delta^x(n),\delta^x(k)])=0$
for any sequence $\seqn{\bar\delta}$ such that $\bar\delta_n\geq\delta^x(n)$ for all $n$. If 
$x\geq0$ then it is obvious that $X$ is a positive supermartingale. If $Y$ were another
integrable quasi-potential corresponding to $x$ via (\ref{isometry}) then $X-Y$ would be at the same
time a quasi-potential and a martingale. But then, for all $\delta\in\Delta$ and some sequence 
$\seqn{\bar\delta}$, 
$P(\abs{X_\delta-Y_\delta})\leq\lim_nP(\abs{X_{\bar\delta_n\vee\delta}-Y_{\bar\delta_n\vee\delta}})=0$.
\end{proof}
A conclusion implicit in Theorem \ref{theorem ba} is that all quasi-potentials
are integrable. Another consequence is the following version of a result of Stricker 
\cite[Theorem 1.2, p. 55]{stricker quasi}.
\begin{corollary}[Stricker]
\label{corollary stricker}
Let $X$ be a quasi-martingale, $(\mathscr G_\delta:\delta\in\Delta)$ a sub filtration of 
$(\F_\delta:\delta\in\Delta)$ and define $X_\delta^{\mathscr G}=\condP{X_\delta}{\mathscr G_\delta}$. 
Then the process $X^{\mathscr G}$ is itself a quasi-martingale.
\end{corollary}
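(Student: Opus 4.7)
The plan is to bound $\normQ{X^{\mathscr G}}$ directly by $\normQ{X}$; since $X^{\mathscr G}$ is by construction adapted to $(\mathscr G_\delta)$, this inequality alone will establish that it is a quasi-martingale in the sub-filtration. There is no need here to invoke Theorem \ref{theorem ba} or Lemma \ref{lemma preliminary}: the argument is purely a manipulation of conditional expectations, applied pointwise to each summand of $V^d(X^{\mathscr G})$.

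Given $d=\{\delta_1,\ldots,\delta_{N+1}\}\in\D$, I would rewrite the $n$-th summand $\dabs{X^{\mathscr G}_{\delta_n}-\condP{X^{\mathscr G}_{\delta_{n+1}}}{\mathscr G_{\delta_n}}}$ via two applications of the tower property. The first uses $\mathscr G_{\delta_n}\subseteq\mathscr G_{\delta_{n+1}}$ to collapse $\condP{\condP{\X{n+1}}{\mathscr G_{\delta_{n+1}}}}{\mathscr G_{\delta_n}}$ into $\condP{\X{n+1}}{\mathscr G_{\delta_n}}$; the second uses $\mathscr G_{\delta_n}\subseteq\Fd{n}$ and the $\Fd{n}$ measurability of $\X{n}$ to rewrite the resulting difference as $\condP{\X{n}-\condP{\X{n+1}}{\Fd{n}}}{\mathscr G_{\delta_n}}$. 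Conditional Jensen applied to $t\mapsto\abs{t}$ then dominates this absolute value by $\condP{\V{n}}{\mathscr G_{\delta_n}}$.

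Summing over $n$ and integrating against $P$ gives $P(V^d(X^{\mathscr G}))\leq P(V^d(X))\leq\normQ{X}$, so taking the supremum over $d\in\D$ yields $\normQ{X^{\mathscr G}}\leq\normQ{X}<\infty$. I do not foresee a serious obstacle: the only care required is to apply the two inclusions $\mathscr G_{\delta_n}\subseteq\mathscr G_{\delta_{n+1}}$ (from the time ordering inside $\mathscr G$) and $\mathscr G_{\delta_n}\subseteq\Fd{n}$ (from $\mathscr G\subseteq\F$) in the correct order. A side benefit of this direct approach is the sharp norm contraction $\normQ{X^{\mathscr G}}\leq\normQ{X}$.
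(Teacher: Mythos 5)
Your proof is correct, but it takes a genuinely different route from the paper, which disposes of the corollary in one line via Theorem \ref{theorem ba}: the quasi-martingale $X$ corresponds to a bounded, locally countably additive $x$ with $x(F\times]\delta,\delta'])=P(\set{F}(X_\delta-\condP{X_{\delta'}}{\Fd{}}))$ for $F\in\Fd{}$, and since for $F\in\mathscr G_\delta$ this same quantity equals $P(\set{F}(X^{\mathscr G}_\delta-\condP{X^{\mathscr G}_{\delta'}}{\mathscr G_\delta}))$, the measure attached to $X^{\mathscr G}$ relative to the sub filtration is simply the restriction of $x$ to the analogue of $\mathscr P$ built from $(\mathscr G_\delta)$; restriction preserves boundedness and local countable additivity, so the isomorphism applied to the filtration $\mathscr G$ gives the claim. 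Your argument instead works directly on the variation: the two tower-property steps (from $\mathscr G_{\delta_n}\subseteq\mathscr G_{\delta_{n+1}}$ and $\mathscr G_{\delta_n}\subseteq\Fd{n}$) correctly turn the $n$-th summand of $V^d(X^{\mathscr G})$ into $\dabs{\condP{\X{n}-\condP{\X{n+1}}{\Fd{n}}}{\mathscr G_{\delta_n}}}$ (the $\Fd{n}$-measurability of $\X{n}$ is not even needed, linearity suffices), and conditional Jensen plus $P(\condP{\cdot}{\mathscr G_{\delta_n}})=P(\cdot)$ give $P(V^d(X^{\mathscr G}))\leq P(V^d(X))\leq\normQ{X}$, hence $\normQ{X^{\mathscr G}}\leq\normQ{X}$ after the supremum over $d\in\D$; you also correctly read the statement as asserting the quasi-martingale property of $X^{\mathscr G}$ with respect to $(\mathscr G_\delta)$. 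What each approach buys: yours is elementary and self-contained and makes the norm contraction explicit, whereas the paper's is essentially free once Theorem \ref{theorem ba} is available and illustrates its main thesis that the quasi-martingale property lives in the associated finitely additive measure (the contraction is implicit there as well, since restricting $x$ cannot increase its total variation norm). The only caveat, which you share with the statement itself rather than introduce, is that the paper does not require quasi-martingales to be integrable, so the conditional expectations defining $X^{\mathscr G}$ and appearing in your manipulations must be understood in whatever generalized sense makes them well defined; the paper's one-line proof glosses over the same point.
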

\begin{proof}
The restriction to the subfiltration preserves local countable additivity of the corresponding measure.
\end{proof}
The issue of the invariance of the process structure with respect to a change of the underlying
filtration was also addressed in \cite{JOTP}.

\section{Decompositions.}
The following version of Riesz decomposition follows immediately from Theorem \ref{theorem ba}.
\begin{corollary}
\label{corollary riesz}
A process $X$ is a quasi-martingale if and only if it decomposes into the sum 
\begin{equation}
\label{riesz}
X=M+B
\end{equation} 
of a martingale $M$ and a quasi-potential $B$. The decomposition is then unique. 
Moreover, if $X$, $M$ and $B$ are as in (\ref{riesz}), then
\begin{enumerate}
\item[(\textit{i})] $X$ is a (positive) supermartingale if and only if $B$ is a potential 
(and $M$ is positive);
\item[(\textit{ii})] If for any $\delta$ there is an integer $n$ such that $\delta^x(n)\geq\delta$, 
then $M$ is uniformly integrable if and only if $\{X_{\delta^x(n)}:n=1,2,\ldots\}$ is so.
\end{enumerate}
\end{corollary}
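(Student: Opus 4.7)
The plan is to read the decomposition (\ref{riesz}) off the isomorphism of Theorem \ref{theorem ba}. Given a quasi-martingale $X$, let $x\in\M^{loc}$ be its associated measure and let $B$ be the unique integrable quasi-potential corresponding to $x$ via Theorem \ref{theorem ba}(\textit{i}). Since $X$ and $B$ induce the same $x$, they represent the same class in $\mathcal Q$, so $M:=X-B$ is a martingale. Conversely, a martingale has vanishing quasi-norm, so $X=M+B$ is a quasi-martingale whenever $B$ is one. For uniqueness, any decomposition $X=M'+B'$ exhibits $B'$ as a quasi-potential (automatically integrable, as noted just after Theorem \ref{theorem ba}) whose associated measure coincides with $x$, whence $B'=B$ by Theorem \ref{theorem ba}(\textit{i}) and $M'=M$.

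For (\textit{i}), I first check that supermartingales correspond via (\ref{isometry}) to positive measures. For any nonnegative $f\in\B(\bar\F)$ and $d=\{\delta_1,\ldots,\delta_{N+1}\}$, the $n$-th summand of $P\int\Pd(f)dX$ equals $P(\condP{f_{\delta_{n+1}}}{\Fd{n}}(\condP{\X{n+1}}{\Fd{n}}-\X{n}))$, in which the first factor is nonnegative and the second nonpositive by the supermartingale property, so the whole expression is $\leq 0$ and positivity of $\LIM$ gives $x(f)\geq 0$. Theorem \ref{theorem ba}(\textit{ii}) then identifies $B$ as an integrable potential. For the positivity clause, produce from the definition of potential a sequence $\seqn{\bar\delta}$ along which $P(\abs{B_{\bar\delta_n\vee\delta}})\to 0$ for every $\delta$; the martingale identity $M_\delta=\condP{X_{\bar\delta_n\vee\delta}}{\Fd{}}-\condP{B_{\bar\delta_n\vee\delta}}{\Fd{}}$ expresses $M_\delta$ as the difference of a nonnegative term (since $X\geq 0$) and one that vanishes in $L^1$, forcing $M\geq 0$. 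The converse, that a sum of (positive) supermartingales is a (positive) supermartingale, is routine.

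For (\textit{ii}), recall from the proof of Theorem \ref{theorem ba} that $P(\abs{B_{\delta^x(n)}})\to 0$, so $\sseqn{B_{\delta^x(n)}}$ is uniformly integrable; writing $M_{\delta^x(n)}=X_{\delta^x(n)}-B_{\delta^x(n)}$, uniform integrability of $\sseqn{M_{\delta^x(n)}}$ is equivalent to that of $\sseqn{X_{\delta^x(n)}}$. Under the cofinality hypothesis, for each $\delta\in\Delta$ one may choose $n$ with $\delta^x(n)\geq\delta$, and the martingale identity $M_\delta=\condP{M_{\delta^x(n)}}{\Fd{}}$ represents every $M_\delta$ as a conditional expectation of a member of the uniformly integrable family $\sseqn{M_{\delta^x(n)}}$, which preserves uniform integrability. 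Hence $M$ is uniformly integrable precisely when $\sseqn{X_{\delta^x(n)}}$ is. The most delicate step I anticipate is the sign check in (\textit{i}) together with the pointwise (not merely $L^1$) conclusion $M\geq 0$; the rest is bookkeeping once Theorem \ref{theorem ba} is in hand.
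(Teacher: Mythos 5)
Your proof is correct and follows essentially the same route as the paper: the decomposition, its uniqueness and claim (\textit{i}) are read off Theorem \ref{theorem ba} exactly as in the text (with $M=X-B$ a martingale because $\normQ{X-B}=0$), and claim (\textit{ii}) rests on the same two observations, namely $\lim_nP(\abs{B_{\delta^x(n)}})=0$ and the martingale identity under the cofinality hypothesis. The only differences are cosmetic: you spell out the sign computation showing $x\geq0$ for a supermartingale, which the paper leaves implicit, and you invoke uniform integrability of conditional expectations of a uniformly integrable family where the paper uses the equivalent bound $P(\abs{M_\delta}\set{F})\leq\sup_nP(\abs{X_{\delta^x(n)}}\set{F})$.
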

\begin{proof} 
By Theorem \ref{theorem ba}, $X$ corresponds to some $x\in\M^{loc}$ and $x$, in turn, to a unique 
quasi-potential, $B$. Given that $\normQ{X-B}=0$, then $M=X-B$ is indeed a martingale. To be 
more explicit, we write
\begin{equation}
\label{riesz decomposition}
X_\delta=\lim_n\lcondP{X_{\delta^x(n)\vee\delta}}{\Fd{}}+\lrestr{\dDer{x_\delta}{P}}{_{\Fd{}}} 
=M_\delta+B_\delta
\end{equation}
the sequence $\sseqn{\delta^x(n)}$ being defined as in (\ref{deltan}). To see that
(\ref{riesz decomposition}) is well defined, observe that the limit appearing in it 
exists in $L^1(\Fd{})$ (by Lemma  \ref{lemma preliminary}) and that
$x_\delta(F)=\lim_nx(F\times]\delta,\delta^x(n)])=\lim_nx(F\times]\delta,\delta^x(n)\vee\delta])
=\lim_nP(\set{F}(X_\delta-X_{\delta^x(n)\vee\delta}))$ for all $F\in\Fd{}$. Moreover, 
$$
\dabs{P(\set{F}(M_\delta-M_{\delta'}))}=
\lim_n\dabs{P(\set{F}(X_{\delta^x(n)\vee\delta}-X_{\delta^x(n)\vee\delta'}))}\leq
\dabs{x}((]\delta^x(n),\delta^x(n)\vee\delta']))=0
$$
so that $M$ is a martingale and it is positive if $X\geq0$. 

Claim (\textit{i}) is a consequence of the fact that $x\geq0$ whenever $X$ is a supermartingale. 
If $\{X_{\delta^x(n)}:n=1,2,\ldots\}$ is uniformly integrable then
$P(\abs{M_\delta}\set{F})\leq\sup_nP(\abs{X_{\delta^x(n)}}\set{F})$
for any $F\in\Fd{}$, which implies uniform integrability of $M$. The converse
is obvious given that the collection $\{M_{\delta^x(n)}:n=1,2,\ldots\}$ is
uniformly integrable by assumption while $\lim_nP(\abs{B_{\delta^x(n)}})=0$ by
construction.
\end{proof} 

It is noteworthy that, when the index set is order dense, e.g. $\Delta=\R$, uniform 
integrability of the martingale $M$ does not require more than uniform integrability of 
$X$ \textit{along a given sequence}, i.e. of a countable set. In \cite{STAPRO} it was 
shown that the class $D$ property could likewise be restricted to consider only stopping 
times with countably many values. 

\begin{corollary}[Rao and Stricker]
\label{corollary rao}
A process $X$ is a quasi-potential (resp. an integrable quasi-martingale) if 
and only if it decomposes into the difference 
\begin{equation}
\label{rao}
X=X'-X''
\end{equation}
of two potentials (resp. positive, integrable supermartingales) such that
$\normQ{X}=\normQ{X'}+\normQ{X''}$. Any other decomposition $Y'-Y''$
of $X$ as the difference of two potentials (resp. positive, integrable supermartingales)
is such that $Y'-X'$ and $Y''-X''$ are potentials (resp. positive, integrable, supermartingales). 
\end{corollary}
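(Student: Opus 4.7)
The plan is to reduce both statements to the Jordan decomposition of the measure $x\in\M^{loc}$ associated with $X$ via Theorem~\ref{theorem ba}. For the quasi-potential case, I let $x=x^+-x^-$ be the Jordan decomposition and $\{P_x,N_x\}$ a corresponding Hahn decomposition of $\bar\F$. Since $x^+$ and $x^-$ are obtained from $x$ by restriction to $P_x$ and $N_x$, they inherit local countable additivity and belong to $\M^{loc}_+$. Theorem~\ref{theorem ba}(\textit{ii}) then produces integrable potentials $X'$ and $X''$ corresponding to $x^+$ and $x^-$; the process $X'-X''$ is an integrable quasi-potential whose associated measure is $x^+-x^-=x$, so uniqueness in Theorem~\ref{theorem ba}(\textit{i}) forces $X=X'-X''$. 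The norm identity $\normQ{X}=\normQ{X'}+\normQ{X''}$ then follows from the isometry together with $\norm{x}=\norm{x^+}+\norm{x^-}$.

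For the integrable quasi-martingale case I first apply Corollary~\ref{corollary riesz} to write $X=M+B$ with $M$ a martingale and $B$ the integrable quasi-potential part; since $X$ and $B$ are integrable, so is $M$. I decompose $B=B'-B''$ as above and apply a Krickeberg-type argument to $M$: the submartingale $M^+$ is bounded in $L^1$ by $\sup_\delta P(\abs{M_\delta})$, and the essential supremum
\begin{equation*}
M'_\delta=\mathrm{ess\,sup}_{\delta'\geq\delta}\lcondP{M^+_{\delta'}}{\Fd{}}
\end{equation*}
defines a non-negative integrable martingale; the analogous construction from $M^-$ yields $M''$, and $M=M'-M''$ by Krickeberg's identity. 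Setting $X'=M'+B'$ and $X''=M''+B''$ expresses $X$ as a difference of two positive integrable supermartingales, and since martingales have zero $\mathcal Q$-norm the identity $\normQ{X}=\normQ{X'}+\normQ{X''}$ reduces to the quasi-potential case.

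The converses are essentially routine. A positive integrable supermartingale $S$ satisfies $P(V^d(S))=P(S_{\delta_1})-P(S_{\delta_{N+1}})$ by telescoping, bounded by $\sup_\delta P(S_\delta)$, so any difference of two such processes is an integrable quasi-martingale. To see that a difference of two potentials $B'-B''$ is moreover a quasi-potential, I exploit the linearity of the order on $\Delta$: combining the approximating sequences $\seqn{\delta'}$ and $\seqn{\delta''}$ for $B'$ and $B''$ into $\delta_n=\delta'_n\vee\delta''_n$ yields $P(\abs{B'_{\bar\delta_n}-B''_{\bar\delta_n}})\leq P(B'_{\bar\delta_n})+P(B''_{\bar\delta_n})\to 0$ for every sequence $\bar\delta_n\geq\delta_n$.

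For uniqueness, if $X=Y'-Y''$ is another decomposition into potentials with associated measures $y',y''\in\M^{loc}_+$, then $y'-y''=x$ and the identity
\begin{equation*}
y'(A)-x^+(A)=y'(A\setminus P_x)+y''(A\cap P_x)\geq 0,\qquad A\in\bar\F,
\end{equation*}
shows that $y'-x^+\in\M^{loc}_+$; by Theorem~\ref{theorem ba}(\textit{ii}) the corresponding process $Y'-X'$ is a potential, and from $Y'-X'=Y''-X''$ the second difference is the same process. The positive integrable supermartingale case follows by routing through the Riesz decomposition. The main obstacle I expect is the Krickeberg step, which in the paper's general linearly ordered setting must rest on the essential supremum over $\{\delta':\delta'\geq\delta\}$ rather than on a countable cofinal sequence; once this is in place the rest is a clean assembly.
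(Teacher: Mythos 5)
Your overall strategy is the same as the paper's (Jordan decomposition of the associated $x\in\M^{loc}$ via Theorem \ref{theorem ba} for the quasi-potential case, then Riesz decomposition plus a Krickeberg-type splitting of the martingale part, with the norm identity coming from $\norm{x}=\norm{x^+}+\norm{x^-}$), the one genuine difference being that you build $M'$ as $\mathrm{ess\,sup}_{\delta'\geq\delta}\condP{M^+_{\delta'}}{\Fd{}}$ over \emph{all} future indices, while the paper takes $\lim_n\condP{M^+_{\delta^x(n)\vee\delta}}{\Fd{}}$ along the distinguished sequence of (\ref{deltan}); your variant is legitimate (the family is increasing in $\delta'$ since $M^+$ is a submartingale, and cofinality of $\{\delta'\geq\delta_2\}$ in $\{\delta'\geq\delta_1\}$ gives the martingale property), and it has the advantage of not depending on the choice of $\sseqn{\delta^x(n)}$. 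However, your justification of the first step is flawed: for a merely finitely additive $x\in ba(\bar\F)$ an \emph{exact} Hahn decomposition $\{P_x,N_x\}$ need not exist (only $\epsilon$-approximate versions do), so neither the claim that $x^\pm$ ``inherit'' local countable additivity as restrictions of $x$ to $P_x$, $N_x$, nor the identity $y'(A)-x^+(A)=y'(A\setminus P_x)+y''(A\cap P_x)$ in your uniqueness step, is available as written. Both conclusions are true but need different arguments: $0\leq x^+,x^-\leq\abs{x}$ and the proof of Theorem \ref{theorem ba} shows $\abs{x}\in\M^{loc}$, so $x^\pm\in\M^{loc}_+$; and $y'\geq x^+$ whenever $x=y'-y''$ with $y',y''\geq0$ is the lattice (minimality) property of the Jordan decomposition in $ba$, which holds without any Hahn set.

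The second and more substantial gap is the minimality assertion in the positive-supermartingale case, which you dismiss with ``follows by routing through the Riesz decomposition.'' This is exactly where the paper does real work: it proves $\lim_n\condP{Y'_{\delta^x(n)\vee\delta}}{\Fd{}}\geq M'_\delta$ through a double-limit chain of conditional expectations starting from $Y'\geq X^+$, i.e.\ it shows that the martingale part of $Y'$ dominates $M'$; the comparison of the potential parts via $y'\geq x^+$ alone does not give $Y'-X'\geq0$. With your ess-sup construction the missing step can in fact be closed cleanly, and you should spell it out: writing $Y'=N'+C'$ and $Y''=N''+C''$ (Riesz), uniqueness and linearity give $N'-N''=M$, hence $N'\geq M$ since $N''\geq0$; any nonnegative supermartingale $Z\geq M$ satisfies $Z_\delta\geq\condP{Z_{\delta'}}{\Fd{}}\geq\condP{M^+_{\delta'}}{\Fd{}}$ for all $\delta'\geq\delta$, so $N'\geq M'$; combined with $C'-B'$ being the potential associated with $y'-x^+\in\M^{loc}_+$, you get $Y'-X'=(N'-M')+(C'-B')\geq0$, and it is a supermartingale because it is a martingale plus a potential. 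Also note, in the quasi-potential case, that identifying $X$ with $X'-X''$ through the uniqueness in Theorem \ref{theorem ba}(\textit{i}) uses that $X$ itself is integrable, which is the remark following that theorem. As submitted, then, the proposal has the right architecture but two steps that do not stand: the Hahn-decomposition appeals and the unproved domination of the martingale part.
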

\begin{proof}
Let $X$ be a quasi-martingale isomorphic to $x$, $x'-x''$ be its Jordan decomposition and $X'$ 
and $X''$ the associated potentials as of Theorem \ref{theorem ba}.(\textit{i}). Then,
$\normQ{X}=\norm{x}=\norm{x'}+\norm{x''}=\norm{X'}+\norm{X''}$.
If $X$ is a quasi potential, then (\ref{rao}) follows from (\ref{riesz decomposition}).
If $X$ is an integrable quasi-martingale with Riesz decomposition $M+B$, then $M$ is integrable.
Define 
$$M'_\delta=\lim_n\lcondP{M^+_{\delta^x(n)\vee\delta}}{\Fd{}}\quad\text{and}\quad 
M''_\delta=\lim_n\lcondP{M^-_{\delta^x(n)\vee\delta}}{\Fd{}}$$
Observe that convergence takes place in $L^1(\Fd{})$ as the corresponding sequences are 
increasing but norm bounded. Let $B=B'-B''$ be the decomposition of the quasi-potential 
$B$ into the difference of two potentials, following from the previous step and set
$X'=M'+B'$ and $X''=M''+B''$. Clearly, $X'$ and $X''$ are positive supermartingales and 
satisfy (\ref{rao}). Moreover $\normQ{X}=\normQ{B'}+\normQ{B''}=\normQ{X'}+\normQ{X''}$.
Let $Y'-Y''$ be a decomposition of $X$ into two potentials (resp. positive, integrable 
supermartingales). Then $Y'$ corresponds to some $y'\in \M^{loc}$ such that necessarily 
$y'\geq x'$ so that the potential part of $Y'$ exceeds the corresponding component of $X'$. 
To conclude that the same is true of the martingale part (if any), observe that $Y'\geq X^+$ 
so that
\begin{eqnarray*}
\lim_n\lcondP{Y'_{\delta^x(n)\vee\delta}}{\Fd{}}&\geq&
\lim_n\lcondP{X^+_{\delta^x(n)\vee\delta}}{\Fd{}}\\
&\geq&\lim_k\lim_n\lcondP{\lcondP{X_{\delta^x(n)\vee\delta}}{\F_{\delta^x(k)\vee\delta}}^+}{\Fd{}}\\
&=&\lim_k\lcondP{M'_{\delta^x(k)\vee\delta}}{\Fd{}}\\
&=&M'_\delta
\end{eqnarray*}
We then obtain from (\ref{riesz decomposition}) that $Y'-X'$ is indeed a potential (resp. 
positive, integrable supermartingale). It is easily seen that the same is true of $Y''$.
\end{proof}

Local countable additivity has not been much studied in the literature, given the 
exclusive attention paid to the class $ca(\mathscr P)$ in the literature. Such
attention was motivated by the proof offered by Dol\'{e}ans-Dade \cite{doleans} that the Doob 
Meyer decomposition is equivalent, \textit{under the usual conditions}, to countable 
additivity over the predictable $\sigma$ algebra. It was Mertens \cite{mertens} the 
first to provide a version of this result without invoking such regularity assumptions, 
a result later proved in greater generality by Dellacherie and Meyer \cite[Theorem 20, p. 414]{dellacherie meyer}. 
A proof, in the setting of linearly ordered index set, was given in \cite{STAPRO} 
based on a suitable extension of the \textit{class D property}. A purely measure theoretic 
proof was provided in \cite[theorem 4, p. 597]{JOTP} for processes indexed by the 
positive reals. This same argument will now be easily adapted to the general setting 
considered here.

\begin{theorem}
\label{theorem doob meyer}
Let $x\in\mathscr M^{loc}_+$ be isomorphic to a potential $X$ and define $x_\F$ implicitly as
\begin{equation}
\label{marginal}
x_\F(F)=x(F\times\Delta)\qquad F\in\F
\end{equation}
Then $x_\F\ll P$ if and only if there exist $M\in L^1_+$ and an 
increasing, integrable, natural process $A$ such that
\begin{equation}
\label{doob meyer}
X_\delta=\condP{M}{\Fd{}}-A_\delta\qquad P\ a.s.,\ \delta\in\Delta
\end{equation}
The decomposition (\ref{doob meyer}) is unique.
\end{theorem}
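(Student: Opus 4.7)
The plan is to leverage the isomorphism of Theorem \ref{theorem ba} between $\mathcal Q$ and $\M^{loc}$, so that the Doob--Meyer decomposition reduces to the assertion that the marginal $x_\F$ admits a density with respect to $P$. For the ``only if'' direction, assume (\ref{doob meyer}) and substitute $X_\delta=\condP{M}{\Fd{}}-A_\delta$ into (\ref{isometry}); since $\Pd(f)$ takes an $\Fd{n}$-measurable value on each piece $]\delta_n,\delta_{n+1}]$, the martingale part contributes zero to $P\int\Pd(f)dX$, leaving $x(f)=\LIM_{d\in\D} P\int\Pd(f)dA$ for every $f\in\B(\bar\F)$. Specialising to $f=\set{F\times\Delta}$, $F\in\F$, and invoking naturalness (\ref{natural}) of $A$ with $b=\set{F}$, both sides collapse to $P(\set{F}A_\infty)$, where $A_\infty:=\lim_\delta A_\delta\in L^1_+$ exists by monotonicity and integrability; hence $x_\F(F)=P(\set{F}A_\infty)$ and $x_\F\ll P$.

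For the ``if'' direction, set $M:=\dDer{x_\F}{P}\in L^1_+$ and define $A_\delta:=\condP{M}{\Fd{}}-X_\delta$, which is adapted and integrable. Positivity $A_\delta\ge 0$ a.s.\ follows from the decomposition $x_\F(F)=x^\delta(F)+x_\delta(F)$ for $F\in\Fd{}$, where $x^\delta,x_\delta$ are as in the proof of Theorem \ref{theorem ba}: the potential property forces $x_\delta(F)=P(\set{F}X_\delta)$, so $P(\set{F}A_\delta)=x^\delta(F)\ge 0$, and taking $F=\{A_\delta<0\}\in\Fd{}$ yields $A_\delta\ge 0$ a.s. The analogous identity
$$
P(\set{F}(A_{\delta_2}-A_{\delta_1}))=x(F\times]\delta_1,\delta_2])\ge 0,\qquad F\in\F_{\delta_1},\ \delta_1\le\delta_2,
$$
supplies the submartingale inequality $\lcondP{A_{\delta_2}-A_{\delta_1}}{\F_{\delta_1}}\ge 0$. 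Naturalness of $A$ and the genuine monotonicity $P(A_{\delta_1}\le A_{\delta_2})=1$ are then obtained by importing the refinement argument of \cite[theorem 4]{JOTP}: local countable additivity of $x$ permits the $\F_{\delta_2}$-measurable set $\{A_{\delta_2}<A_{\delta_1}\}$ to be approximated by $\F_{\delta_1}$-measurable sets along a countable sequence of partition refinements, forcing it to be $P$-null; naturalness (\ref{natural}) is handled similarly, exploiting that $-A$ and $X$ share the same associated measure $x$ via (\ref{isometry}) so that both sides of the naturalness identity reduce to $x(bf)$. Uniqueness of the decomposition is immediate: the ``only if'' direction pins $M=\dDer{x_\F}{P}$ a.s., and $A$ is then determined as $\condP{M}{\Fd{}}-X$.

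The principal obstacle is upgrading the submartingale inequality above to the pathwise statement $P(A_{\delta_1}\le A_{\delta_2})=1$, since the natural localiser $\{A_{\delta_2}<A_{\delta_1}\}$ is $\F_{\delta_2}$-measurable rather than $\F_{\delta_1}$-measurable; this is precisely the step where the measure-theoretic machinery of \cite{JOTP} for $\Delta=\R_+$ must be adapted to the linearly ordered setting considered here.
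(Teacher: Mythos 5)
Your construction in the substantive direction ($x_\F\ll P$ implies (\ref{doob meyer})) has a genuine gap at exactly the point you flag. Defining $A_\delta:=\condP{M}{\Fd{}}-X_\delta$ gives adaptedness and positivity cheaply, but it leaves you to prove a.s.\ monotonicity and naturalness, and neither is actually proved: the inequality $\lcondP{A_{\delta_2}-A_{\delta_1}}{\F_{\delta_1}}\geq 0$ does not yield $P(A_{\delta_1}\leq A_{\delta_2})=1$, and the mechanism you sketch --- approximating the $\F_{\delta_2}$-measurable set $\{A_{\delta_2}<A_{\delta_1}\}$ by $\F_{\delta_1}$-measurable sets along partition refinements --- is not a valid argument (no such approximation exists in general), while ``importing'' \cite[Theorem 4]{JOTP} is precisely the work the theorem requires in the present setting. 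The paper dissolves the difficulty by choosing the other representative: it sets $A_\delta:=\Der{x^\delta}{P}$, a density with respect to $P$ on all of $\F$ rather than on $\Fd{}$, so that $x^\delta\leq x^{\delta'}$ immediately gives $P(0\leq A_\delta\leq A_{\delta'})=1$; naturalness then follows from the identity $x(g)=\LIM_{d}\,x(\Pd(g))$, adaptedness is recovered \emph{from} naturalness, and the decomposition comes from $P(\set{F}X_\delta)=x_\delta(F)=x_\F(F)-x^\delta(F)=P(\set{F}(M-A_\delta))$ for $F\in\Fd{}$. Your order (adapted first, monotone and natural later) is the hard way around, and the hard part is missing.

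The naturalness and uniqueness steps are also unsound as written. That $-A$ and $X$ ``share the same measure via (\ref{isometry})'' pins down only the right-hand side of (\ref{natural}): (\ref{isometry}) involves the projected integrands $\Pd(\cdot)$, i.e.\ the conditional expectations of the increments of $A$, whereas the left-hand side $P(b\int f\,dA)$ pairs those increments with an arbitrary $b\in L^\infty(\F)$; equality of the two sides is exactly what naturalness asserts, and it is what singles out one process among all adapted processes with the same compensating increments, so it cannot be a consequence of (\ref{isometry}) alone. For the same reason your uniqueness argument fails: the decomposition determines only the process $\condP{M}{\Fd{}}$, never $M$ itself, and uniqueness of $A$ genuinely requires naturalness of both candidates --- the paper uses it through $P(\set{F}A_\delta)=\LIM_{d}\lim_n P\int_{\delta_x(n)}^{\delta}\Pd(F)dA$ --- without which the statement is false (this is the whole point of the qualifier ``natural''). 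Finally, in the converse direction note that $\set{F\times\Delta}\notin\E$, so (\ref{natural}) cannot be applied to it directly, and the identity $x_\F(F)=P(\set{F}A_\infty)$ requires handling the moving endpoints of $d$ inside the Banach limit together with $\inf_{\delta}P(A_\delta)=0$; this direction is the easy one (the paper does not even write it out), but it is not as immediate as you claim.
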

\begin{proof}
Assume that $x_\F\ll P$ and recall the definition (\ref{xd}) of $x^\delta\in ba(\F)_+$. 
The inequality $x^\delta\leq x_\F$ guarantees that $x^\delta\ll P$ for all $\delta\in\Delta$. 
Let $A_\delta$ be the corresponding Radon Nykodim derivative. Then $P(0\leq A_\delta\leq A_{\delta'})=1$
for all $\delta\leq\delta'$. Choose $b\in L^\infty(\F)$ and $f\in\E$. 
Then $f=f\set{]\delta',\delta]}$ for some $\delta',\ \delta\in\Delta$ and (\ref{isometry}) implies
\begin{eqnarray*}
P\left(b\int fdA\right)&=&x^\delta (bf)\\
&=&x(bf\set{]\delta',\delta]})\\
&=&\underset{d\in\D}\LIM\ x\left(\Pd(b)f\set{]\delta',\delta]}\right)\\
&=&\underset{d\in\D}\LIM\ x^\delta\left(\Pd(b)f\right)\\
&=&\underset{d\in\D}\LIM\ P\int\Pd(b)fdA
\end{eqnarray*}
so that $A$ is natural (and thus adapted). Moreover, letting $M=\Der{x_\F}{P}$ and
$F\in\Fd{}$ we have
$$
P(\set{F}X_\delta)=\lim_nx(F\times]\delta,\delta^x(n)\vee\delta])+\lim_nP(\set{F}X_{\delta^x(n)\vee\delta})
=x_\delta(F)=x_\F(F)-x^\delta(F)=P(\set{F}(M-A_\delta))
$$
Suppose that $N-B$ is another such decomposition. Then $A$ and $B$ are both natural and
then for each $F\in\Fd{}$,
$$
P(\set{F}A_\delta)=\underset{d\in\D}\LIM\lim_n P\int_{\delta_x(n)}^\delta\Pd(F)dA
=\underset{d\in\D}\LIM\lim_nP\int_{\delta_x(n)}^\delta\Pd(F)dB
=P(\set{F}B_\delta)
$$
\end{proof}

\end{document}